\documentclass[]{interact}
\usepackage{latexsym}
\usepackage{anyfontsize} 
\usepackage{color}
\usepackage{hyperref}
\usepackage{url}
\usepackage{breakurl}
\newcommand{\bburl}[1]{\textcolor{blue}{\url{#1}}}

\makeatletter
\newcommand{\monthyear}[1]{%
  \def\@monthyear{\uppercase{#1}}}
\newcommand{\volnumber}[1]{%
  \def\@volnumber{\uppercase{#1}}}
\makeatother

\theoremstyle{plain}
\numberwithin{equation}{section} 
\newtheorem{thm}{Theorem}[section] 
\newtheorem{theorem}[thm]{Theorem}
\newtheorem{lemma}[thm]{Lemma}

\numberwithin{table}{section} 
\numberwithin{figure}{section}

\begin{document}

\monthyear{Month Year}
\volnumber{Volume, Number}
\setcounter{page}{1}

\title{Self-Convolutions of Generalized Narayana Numbers}

\author{
\name{Greg Dresden\textsuperscript{a}\thanks{Email address: dresdeng@wlu.edu}, Yuechen Xiao\textsuperscript{b}, and Guanzhang Zhou\textsuperscript{a}}
\affil{\textsuperscript{a}Department of Mathematics, Washington and Lee University,
Lexington, VA, 24450, USA;
\textsuperscript{b}Shanghai Starriver Bilingual School,
       2588 Jindu Rd, Minhang District, Shanghai, China}
}

\maketitle

\begin{abstract}
	For the Fibonacci numbers $F_n$, we have the self-convolution formula 
    $5 \sum_{i=0}^n F_i F_{n-i} = (2n)F_{n+1} - (n+1)F_n$. We find the corresponding self-convolution formula for the Narayana numbers $R_n$ which satisfy $R_n = R_{n-1} + R_{n-3}$, and then generalize it to the $k$-step Narayana numbers $\mathcal{R}_n$
    with order-$k$ recurrence formula 
    $\mathcal{R}_n = \mathcal{R}_{n-1} + \mathcal{R}_{n-k}$.
\end{abstract}

\begin{keywords}
Fibonacci; Narayana; convolution
\end{keywords}

\section{Introduction}

We begin with the Fibonacci numbers, defined as
\[ F_0 =0, \ F_1 = 1, \ \mbox{and} \ 
F_{n} = F_{n-1} + F_{n-2} \  \mbox{for $n \geq 2$.}
\]
For the following collection of  formulas, the sum on the left is called the {\em self-convolution} of the Fibonacci numbers.
\begin{align}\label{e.fibcon}
5 \sum_{i=0}^n F_i F_{n-i} &= (2n)F_{n+1} - (n+1)F_n \\[-1.9ex]
&= (n-1)F_n + (2n)F_{n-1} \nonumber \\[0.3ex]
&= (n-1)F_{n+1} + (n+1)F_{n-1} \nonumber \\[0.3ex]
&= n L_n - F_n. \nonumber
\end{align}
Most of these can be found  at \href{https://oeis.org/A001629}{A001629} on the On-Line Encyclopedia of Integer Sequences (OEIS) \cite{oeis} and can also be found in various forms at 
\cite[Identity 58]{BQ},
\cite[Theorem 1]{Robbins}, 
\cite[p.~183]{Vajdo}, \cite[Corollary 1]{Zhang}, and in many other papers. 
 We note that the $L_n$ in the last formula 
    represents the $n$th Lucas number.

For the self-convolution of the Lucas numbers $L_n$, we have 
\begin{equation}\label{e.luccon}
\sum_{i=0}^n L_i L_{n-i} = (n+1)L_n + 2F_{n+1}
\end{equation}
which  appears in  \cite[Identity 57]{BQ} and 
\cite[Corollary 10]{Szakacs2}. The Fibonacci numbers and the Lucas numbers are two specific (and famous) examples of second-order recurrences. A good collection of authors have studied convolutions of linear recurrences. 
Adegoke, Akerele, and Frontczak in \cite{Adegoke} use variations on Binet's formula to give self-convolution formulas for  general second-order recurrence sequences; their paper includes the identities  (\ref{e.fibcon}) and (\ref{e.luccon}), above, along with many others. Nacin \cite{Nacin} found tilings proofs for convolutions of Fibonacci numbers with Pell numbers and with Jacobsthal numbers. For the convolution of Fibonacci polynomials, see \cite{Abd}.

 Aside from \cite[Theorem 6]{DresdenWang} which generalizes  (\ref{e.luccon}) to order-$k$ Lucas-type sequences, and 
 \cite[equation (52)]{Rabinowitz} which covers third-order recurrences, and 
 \cite{Gry} which covers the tribonacci numbers, there are not many results on the  self-convolutions of higher-order recurrence sequences. This paper aims to partially fill that gap. In particular, we will consider the order-$k$ recurrence sequences that are natural generalizations of the Narayana sequence (which is itself a generalization of the Fibonacci numbers).

 \section{The Narayana Sequence}

We now define the Narayana numbers $R_n$ as 
\begin{equation}\label{e.def.Rn}
R_0 = 0, \ R_i = 1 \mbox{\ for $i=1,2$, and }  R_{n} = R_{n-1} +  R_{n-3} \mbox{\ for $n \geq 3$.}
\end{equation}
We can verify by hand that the following self-convolution equation seems to be true: 
\begin{equation}\label{e.Narayana31}
31\sum_{i=0}^n R_i R_{n-i}=
9 (n + 1) R_{n + 2} - 3 (n + 3) R_{ n + 1} 
- 2 (n + 2) R_n.
\end{equation}
Equation (\ref{e.Narayana31}) is indeed true, and  can be derived from equation (52) in Rabinowitz's 1996 article \cite{Rabinowitz}, with a little bit of effort. Rabinowitz's equation covers a general third-order recurrence defined as 
\[
X_0 = X_1 = 0, X_2 = 1, \quad \mbox{and} \quad X_n = pX_{n-1} + qX_{n-2} + rX_{n-3} \quad \mbox{for $n \geq 3$}.
\]
If we take $p=1$, $q=0$, and $r=1$ to match  (\ref{e.def.Rn}),  we eventually arrive at 
\begin{equation}\label{e.Rab31}
\sum_{i=0}^n X_i X_{n-i} = \Big( 6(n-2)X_{n+1} - 2nX_{n-1} + 3(n+1)X_{n-2}\Big)/31.
\end{equation}
Since $X_n = R_{n-1}$ thanks to the slightly shifted initial values, then  with a bit more manipulation we can transform  (\ref{e.Rab31}) into  (\ref{e.Narayana31}).

\section{The 4-step Narayana numbers}

We can think of the Narayana numbers as being a ``3-step" sequence because the last term in the recurrence  (\ref{e.def.Rn}) is $R_{n-3}$. With this in mind, we 
 also define the ``4-step" Narayana numbers $S_n$ 
as 
\begin{equation*}
S_0 = 0, \ S_i = 1 \mbox{\ for $1\leq i \leq 3$, and }  
S_{n} = S_{n-1} +  S_{n-4} \mbox{\ for $n \geq 4$.}
\end{equation*}
We can verify (with some difficulty) that 
\begin{multline}\label{e.Narayana41}
\ \ \ \ \ \ \ \ \ \ \ \ \ \ \ \ \ \ 
283\sum_{i=0}^n S_i S_{n-i}=
64 (n + 2) S_{n + 3} - 16 (n + 5) S_{ n + 2} \\
- 12(n+4)S_{n+1} - 9(n+3)S_n.\ \ \ \ \ \ \ \ \ \ \ \ \ \ \ 
\end{multline}
The constants 5, 31, and 283
that appear on the 
left of each of the self-convolution formulas (\ref{e.fibcon}), (\ref{e.Narayana31}), and (\ref{e.Narayana41}) might seem to be random, but we note that $\sqrt{5}$ is part of the golden ratio \cite{BQ} related to the Fibonacci numbers, and likewise $\sqrt{31}$ for the ``supergolden" ratio \cite{Crilly} associated with the Narayana numbers. Another insight comes from noting that 5, -31, and -283 are the discriminants of $1-x-x^k$ for $k=2$, 3, and 4. For our purposes, we make the observation that all three
 can be expressed as the sum of two powers:
\begin{equation*}
    5 = 2^2 + 1^1,  \qquad 
    31 = 3^3 + 2^2, \qquad \mbox{and} \qquad
    283 = 4^4 + 3^3.
\end{equation*}
Likewise, there seems to be a pattern to the coefficients on the right of each of those self-convolution formula. We can make this more clear if we re-write 
 (\ref{e.Narayana41}) as follows:
\begin{multline}\label{e.Narayana41a}
(4^4 + 3^3)\sum_{i=0}^n S_i S_{n-i}=
        4^3 (n + 2) S_{n + 3} \\ 
- \Big(3^2(n+3)S_n + 3\cdot 4(n+4)S_{n+1} + 4^2  (n + 5) S_{ n + 2}\Big).\ \
\end{multline}
This inspires us to re-write  (\ref{e.Narayana31}) in the same style as (\ref{e.Narayana41a}):
\begin{equation}\label{e.Narayana51a}
    (3^3 + 2^2)\sum_{i=0}^n R_i R_{n-i}=
        3^2 (n + 1) R_{n + 2}  
- \Big( 2^1(n+2)R_{n} + 3^1 (n + 3) R_{ n + 1}\Big).
\end{equation}
We do the same with our Fibonacci formula in  (\ref{e.fibcon}), giving us 
\begin{equation}\label{e.Narayana21a}
    (2^2 + 1^1)\sum_{i=0}^n F_i F_{n-i}=
        2^1 (n+0) F_{n + 1}  
- \Big(  1^0 (n + 1) F_{ n}\Big).
\end{equation}
We can now see that there is a common theme for all three self-convolution formulas. As we show in the next section, we can generalize this beyond the 3-step and 4-step Narayana numbers $R_n$ and $S_n$, respectively.

\section{Main Result}

We define the $k$-step Narayana numbers $\mathcal{R}_n$ as 
\begin{equation}\label{e.RnInitial}
\mathcal{R}_0 = 0, \ \mathcal{R}_i = 1 \mbox{\ for $1\leq i \leq k-1$, and }  
\mathcal{R}_{n} = \mathcal{R}_{n-1} +  \mathcal{R}_{n-k} \mbox{\ for $n \geq k$.}
\end{equation}
When $k=2$ we recapture the Fibonacci numbers $F_n$, and for $k=3$ and $k=4$ we obtain $R_n$ and $S_n$, respectively. To be precise, we should probably use the notation $\mathcal{R}_{n}^{(k)}$
instead of $\mathcal{R}_{n}$ to indicate that these are 
$k$-step Narayana numbers, but to keep our formulas a bit cleaner we will simply use $\mathcal{R}_{n}$ with the understanding that these numbers are dependent on $k$.

We have made a deliberate choice with our initial conditions for $\mathcal{R}_n$: namely, 
$\mathcal{R}_0 = 0$ and then $\mathcal{R}_i = 1$ for $i$ from $1$ to $k-1$. Not only will this choice give us a 
particularly simple generating function as seen in (\ref{e.Rngf}), but also this choice will produce the particularly nice patterns we see in the right-hand sides of 
(\ref{e.Narayana41a}) and (\ref{e.Narayana51a}). Other choices for initial conditions (and yes, we have tried out several of them) do not give such pleasant formulas.

Our main result is as follows.

\begin{theorem}\label{t.t1}
    For $k\geq 2$ fixed, and with  $\mathcal{R}_n$ representing the $k$-step Narayana numbers
    defined above in (\ref{e.RnInitial}), we have
\begin{multline}\label{e.theorem3}
\left(k^k + (k-1)^{k-1} \right)\sum_{i=0}^n  \mathcal{R}_i \mathcal{R}_{n-i} \\ = \ 
k^{k-1} (n + k-2) \mathcal{R}_{n + k-1}
- \sum_{j=0}^{k-2}\left(k^j\cdot (k-1)^{k-2-j}\right) (n+k+j-1) \mathcal{R}_{n+j}. 
\end{multline}
\end{theorem}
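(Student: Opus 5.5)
Our approach is to use generating functions. With the chosen initial conditions, $G(x)=\sum_{n\ge0}\mathcal{R}_n x^n$ satisfies $(1-x-x^k)G(x)=x$ (the terms $\mathcal{R}_1,\dots,\mathcal{R}_{k-1}$ all equal $1$, so only $x$ survives), giving
\[
G(x)=\frac{x}{1-x-x^k}=\frac{x}{P(x)},\qquad P(x)=1-x-x^k .
\]
The left side of the theorem is then $D\,[x^n]G(x)^2=D\,[x^n]\,x^2/P(x)^2$, where $D=k^k+(k-1)^{k-1}$.

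For the right side, note that $\sum_n n\mathcal{R}_{n+j}x^n$ and $\sum_n \mathcal{R}_{n+j}x^n$ can be written using $G$, $xG'$, and polynomial corrections from the first $j$ terms. Also $G'(x)=\big(P(x)+x(1+kx^{k-1})\big)/P(x)^2 = \big(1+(k-1)x^k\big)/P(x)^2$. So every term on the right side is a rational function with denominator $P(x)^2$.

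The proof then reduces to a polynomial identity. One natural route is a Bézout identity for $P$ and $P'$. Since the discriminant of $P$ is, up to sign, $D$, there should be polynomials $A(x),B(x)$ of degree less than $k$ with
\[
A(x)P(x)+B(x)P'(x)=D .
\]
Multiplying the convolution $x^2/P^2$ through and using $x^2/P^2 = x^2(AP+BP')/(D P^2)$ splits it into a piece $x^2A/(DP)$, which is a linear combination of shifted $\mathcal{R}$'s, and a piece $x^2BP'/(DP^2)=-\frac{x^2B}{D}\,(1/P)'$. The second piece, after integration by parts in coefficient form, produces the $n$-weighted terms. The coefficients $k^{j}(k-1)^{k-2-j}$ strongly suggest that $B$ and $A$ have geometric-type coefficients.

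Concretely, I would guess and verify
\[
B(x)=\sum_{j} c_j x^j,\qquad c_j \propto k^{j}(k-1)^{k-2-j}.
\]
The check uses the telescoping identity $\sum_{j=0}^{k-2}k^j(k-1)^{k-2-j}(k-(k-1))=k^{k-1}-(k-1)^{k-1}$ and similar geometric sums, which is where $k^k+(k-1)^{k-1}$ arises.

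Rather than working through $A$ and $B$ abstractly, the cleaner execution is direct verification. Form the generating function $H(x)=\sum_n \mathrm{RHS}_n x^n$ explicitly. For each $j$, $\sum_n \mathcal{R}_{n+j}x^n = \big(G(x)-\sum_{i<j}\mathcal{R}_i x^i\big)/x^j$, and here $\mathcal{R}_i=1$ for $1\le i\le j<k$, so the corrections are simple geometric polynomials. The factor $(n+k+j-1)$ corresponds to applying $x\frac{d}{dx}+(k+j-1)$.

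One then shows $P(x)^2H(x)=Dx^2$. This is a polynomial identity of degree about $2k$, proved by collecting coefficients. The geometric sums in $j$ collapse via $\sum_{j=0}^{m} k^j(k-1)^{m-j}=k^{m+1}-(k-1)^{m+1}$.

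The main obstacle is this coefficient bookkeeping: the polynomial corrections from the initial terms, and the interplay with $x^k$ in $P^2=1-2x+x^2-2x^k+2x^{k+1}+x^{2k}$, must cancel exactly. I would first do it carefully for the $k=3$ and $k=4$ cases against (\ref{e.Narayana51a}) and (\ref{e.Narayana41a}) to guide the general pattern.

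An alternative that may avoid some of this is induction on $n$. Show that both sides satisfy the same inhomogeneous recurrence, which is the recurrence of $\mathcal{R}$ applied twice, i.e. annihilated by $P(E)^2$ for the shift operator $E$. Then check the first $2k$ values, where $\mathcal{R}_i=1$ makes the sums explicit geometric expressions.
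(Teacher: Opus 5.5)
Your main route is the paper's own: write $\sum_n (n+c)\mathcal{R}_{n+j}x^n$ as $(x\frac{d}{dx}+c)$ applied to $x^{-j}\bigl(G-\sum_{i<j}\mathcal{R}_ix^i\bigr)$, then compare with $Dx^2/P^2$. The gap is that you stop where the paper does its real work. You never show that the initial-term corrections contribute nothing; you call this ``the main obstacle'' and defer it to pattern-spotting at $k=3,4$.

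Your picture of that step is also off. You expect a degree-$2k$ coefficient identity for $P^2H$ in which the corrections interact with the $x^k$ terms of $P^2$. In fact the rational parts alone already give the answer:
\begin{itemize}
\item $(x\frac{d}{dx}+k-2)(x^{2-k}/P)=x^{3-k}(1+kx^{k-1})/P^2$;
\item since $kP-xP'=k-(k-1)x$, we get $(x\frac{d}{dx}+k+j-1)(x^{1-j}/P)=x^{1-j}(k-(k-1)x)/P^2$;
\item with $d_j=k^j(k-1)^{k-1-j}$, the weighted sum over $j$ telescopes: $\sum_{j=0}^{k-2}(d_{j+1}x^{1-j}-d_jx^{2-j})=k^{k-1}x^{3-k}-(k-1)^{k-1}x^2$.
\end{itemize}
This leaves a numerator of exactly $(k^k+(k-1)^{k-1})x^2$. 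The collapse needed is this geometric sum in $k/x$, not the constant-coefficient identity you quote. Hence the corrections must cancel among themselves, with no help from $P^2$.

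The paper proves this cancellation by showing that $B(x)$ and $C(x)$ carry the identical correction $\frac{k^{k-1}}{x^{k-3}}\sum_{i=0}^{k-3}ix^{i-1}$. For $C(x)$ this requires re-indexing a double sum by $w=j-i$ and the telescoping Lemma 2, $k^{k-2-m}(k-1)^m\sum_{i=0}^m\bigl(\frac{k}{k-1}\bigr)^i(k+i)=k^{k-1}(m+1)$. Your proposal contains neither this nor a substitute, so as it stands it is a plan rather than a proof.

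You can close the gap more cheaply than the paper does. Every correction term is a multiple of $x^{i-j}$ with $i<j$, a strictly negative power, and $x\frac{d}{dx}+c$ sends each monomial to a multiple of itself. So, in formal Laurent series, $H(x)$ is exactly the nonnegative-power part of
\[
k^{k-1}\Bigl(x\frac{d}{dx}+k-2\Bigr)(x^{1-k}G)-\sum_{j=0}^{k-2}k^j(k-1)^{k-2-j}\Bigl(x\frac{d}{dx}+k+j-1\Bigr)(x^{-j}G).
\]
By the computation above this equals $Dx^2/P^2$, which is already a power series. Hence $H=Dx^2/P^2$, with no correction bookkeeping and no Lemma 2.

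On your fallbacks:
\begin{itemize}
\item The B\'ezout relation that actually underlies the theorem is $UP+VP'=Dx^{k-1}$ (your $A,B$), not $UP+VP'=D$. Here $V=-k^{k-1}+\sum_{j=0}^{k-2}k^j(k-1)^{k-2-j}x^{k-1-j}$, so your guess about geometric coefficients is right.
\item The recurrence route is valid in principle, but the shift operator must be built from the reciprocal polynomial $E^k-E^{k-1}-1$, not from $P(E)$.
\item In that route, the $2k$ initial checks involve $\mathcal{R}_m$ for $m$ up to $3k-2$. There the values are no longer all $1$, so the checks do not reduce to geometric sums.
\end{itemize}
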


\noindent It is important to note that this theorem applies for any and all $k\geq 2$. When $k=2$ then the $2$-step Narayana numbers are just the Fibonacci numbers $F_n$ as explained earlier, and indeed  (\ref{e.theorem3}) with $k=2$ and with $F_n$ in place of $\mathcal{R}_n$ will give us 
(\ref{e.Narayana21a}). Likewise, when $k=3$ then we have the
``traditional" Narayana numbers $R_n$, and likewise 
(\ref{e.theorem3}) with $k=3$ and with $R_n$ in place of $\mathcal{R}_n$ will give us 
(\ref{e.Narayana51a}).
Furthermore, if we want a formula for the self-convolution of the ``6-step" Narayana numbers which we define as 
\begin{equation*}
U_0 = 0, \ U_i = 1 \mbox{\ for $1\leq i \leq 5$, and }  
U_{n} = U_{n-1} +  U_{n-6} \mbox{\ for $n \geq 6$,}
\end{equation*}
then Theorem \ref{t.t1} with $k=6$ tells us that 
\[
(6^6 + 5^5) \sum_{i=0}^n U_i U_{n-i} = 6^5(n+4)U_{n+5} - \sum_{j=0}^4 6^j 5^{4-j} (n+5+j)U_{n+j}.
\]

\section{Technical Lemma}

Here is a  lemma that we will need for the proof of our Theorem \ref{t.t1}.

\begin{lemma}\label{lemma2}
    For $k \geq 2$ and $m \geq 0$ both  integers,   we have
\begin{equation}\label{e.lemma2}
    k^{k-2-m} (k-1)^m \sum_{i=0}^m \left(\frac{k}{k-1}
\right)^i (k+i) = k^{k-1} (m+1).   
\end{equation}
\end{lemma}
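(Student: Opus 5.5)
We prove (\ref{e.lemma2}) by evaluating the sum on the left in closed form and then clearing the prefactor. Write $r = k/(k-1)$ and split the sum as
\[
\sum_{i=0}^m r^i (k+i) = k\sum_{i=0}^m r^i + \sum_{i=0}^m i\,r^i .
\]
Both pieces are standard. The first is the geometric sum $(r^{m+1}-1)/(r-1)$. The second is the arithmetico-geometric sum
\[
\sum_{i=0}^m i r^i = \frac{r - (m+1)r^{m+1} + m r^{m+2}}{(r-1)^2},
\]
which one gets by differentiating the geometric series, or by the shift trick $(r-1)S = \sum r^{i+1}\cdot i - \sum r^i\cdot i$.

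The key simplification is that $r-1 = 1/(k-1)$, so $1/(r-1) = k-1$ and $1/(r-1)^2=(k-1)^2$. Substituting, the sum becomes
\[
k(k-1)\bigl(r^{m+1}-1\bigr) + (k-1)^2\bigl(r - (m+1)r^{m+1} + m r^{m+2}\bigr).
\]
Next collect the terms carrying $r^{m+1}$. Using $m r^{m+2} = m r\cdot r^{m+1}$ and $(k-1)^2 r = k(k-1)$, the coefficient of $r^{m+1}$ is
\[
k(k-1) - (k-1)^2(m+1) + k(k-1)m = (k-1)\bigl[k - (k-1)(m+1) + km\bigr] = (k-1)(m+1).
\]
The constant terms are $-k(k-1) + (k-1)^2 r = -k(k-1)+k(k-1) = 0$, so they cancel. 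Hence
\[
\sum_{i=0}^m r^i(k+i) = (k-1)(m+1)\left(\frac{k}{k-1}\right)^{m+1}.
\]

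Finally, multiply by the prefactor $k^{k-2-m}(k-1)^m$. This gives
\[
k^{k-2-m}(k-1)^m\cdot(k-1)(m+1)\,k^{m+1}(k-1)^{-(m+1)} = k^{k-1}(m+1),
\]
as claimed. The only real obstacle is bookkeeping in the arithmetico-geometric formula, which must be stated with the correct signs.

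As a safeguard, we would also give an independent proof by induction on $m$. The base case $m=0$ reads $k^{k-2}\cdot k = k^{k-1}$. For the inductive step, write $T_m = \sum_{i=0}^m r^i(k+i)$. The inductive hypothesis says $T_m = (k-1)(m+1)r^{m+1}$, so it suffices to check
\[
(k-1)(m+1)r^{m+1} + r^{m+1}(k+m+1) = (k-1)(m+2)r^{m+2}.
\]
Dividing by $r^{m+1}$, the left side is $(k-1)(m+1)+k+m+1 = k(m+2)$. The right side is $(k-1)(m+2)r = k(m+2)$, so the two sides agree. This short check confirms the closed form and avoids the sign-sensitive formula altogether.
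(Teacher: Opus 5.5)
Your proof is correct; both arguments check out, including the signs in the arithmetico-geometric formula, the reduction of the $r^{m+1}$ coefficient to $(k-1)(m+1)$, and the cancellation of the remaining terms via $(k-1)^2 r = k(k-1)$.

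Your main route differs from the paper's. You evaluate $\sum r^i$ and $\sum i r^i$ separately in closed form and then recombine. The paper instead observes that each summand is already a difference,
$\theta^i(k+i) = (k-1)\bigl((i+1)\theta^{i+1} - i\theta^i\bigr)$ with $\theta = k/(k-1)$, so the sum telescopes directly to $(k-1)(m+1)\theta^{m+1}$.

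Your backup induction is the paper's argument in disguise. The inductive step you verify, $r^{m+1}(k+m+1) = (k-1)\bigl((m+2)r^{m+2} - (m+1)r^{m+1}\bigr)$, is exactly the telescoping identity at $i=m+1$.

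The telescoping (equivalently, inductive) route is shorter. It needs no quoted formula for $\sum i r^i$, which avoids the sign-sensitive bookkeeping you flagged. However, it requires guessing the antidifference, or the target $(k-1)(m+1)r^{m+1}$, in advance. Your closed-form computation is more mechanical, but it derives that target rather than presupposing it.
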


\begin{proof}
    For convenience, we set $\theta=k/(k-1)$ and so the left-hand side of (\ref{e.lemma2})  is 
\begin{equation}\label{e.lemma21}
k^{k-2-m} (k-1)^m \sum_{i=0}^m \theta^i (k+i).
\end{equation}
This is actually a telescoping sum, and to see this
we re-write $\theta^i (k+i)$ as follows:
\begin{align*}
\theta^i (k+i) &= \theta^i(k + ik - ik + i)\\ 
            &= \theta^i((i+1)k-i(k-1)).    
\end{align*}
We now factor out $(k-1)$ from the right, and we use that $k/(k-1) = \theta$, to give us 
\begin{equation*}
    \theta^i (k+i) = (k-1)\theta^i((i+1)\theta - i).
\end{equation*}
Finally, we multiply through by $\theta^i$ on the right to give us 
\begin{equation*}
 \theta^i (k+i) = (k-1)((i+1)\theta^{i+1} - i\theta^i).
\end{equation*}
This allows us to re-write (\ref{e.lemma21}) as 
\begin{equation*}
k^{k-2-m} (k-1)^m \sum_{i=0}^m (k-1)((i+1)\theta^{i+1} - i\theta^i),
\end{equation*}
and so this telescoping sum collapses to give us 
\begin{equation*}
k^{k-2-m} (k-1)^m \cdot (k-1)((m+1)\theta^{m+1}.
\end{equation*}
Since $\theta=k/(k-1)$, then this expression collapses once again to give us 
\[
k^{k-1}(m+1),
\]
as desired.
\end{proof}

\section{Proof of Theorem \ref{t.t1}}

We now have all the pieces we need to prove our main result.

\begin{proof}[Proof of Theorem \ref{t.t1}]
    For convenience, we will label the three parts of  (\ref{e.theorem3}) as follows:
\begin{align}
A_n &= \left(k^k + (k-1)^{k-1} \right)\sum_{i=0}^n  \mathcal{R}_i \mathcal{R}_{n-i}, \label{e.An}\\
B_n &=  
k^{k-1} (n + k-2) \mathcal{R}_{n + k-1}, \label{e.Bn}\\
C_n &=  \sum_{j=0}^{k-2}\left(k^j\cdot (k-1)^{k-2-j}\right) (n+k+j-1) \mathcal{R}_{n+j}. \label{e.Cn}
\end{align}
To show that $A_n = B_n - C_n$, we will show that their generating functions, which we will write as $A(x), B(x)$, and $C(x)$, satisfy
\begin{equation*}
A(x) = B(x) - C(x). 
\end{equation*}

\noindent{\bf Generating function for the first part}. We begin with $A(x)$. Since the generating function for 
$\mathcal{R}_n$ is 
\begin{equation}\label{e.Rngf}
\sum_{n=0}^{\infty} \mathcal{R}_n x^n = \frac{x}{1-x-x^k},
\end{equation}
then  the Cauchy product rule \cite[p.~36]{Wilf} tells us that the generating function for the self-convolution 
$\sum_{i=0}^n \mathcal{R}_i \mathcal{R}_{n-i}$
will be given by 
\[
\sum_{n=0}^{\infty}\left(\sum_{i=0}^n \mathcal{R}_i \mathcal{R}_{n-i}\right) x^n = \left( \frac{x}{1-x-x^k}\right)^2 = 
\frac{x^2}{(1-x-x^k)^2}.
\]
So, our generating function $A(x)$ for the sequence 
$A_n$ from  (\ref{e.An}) is 
\begin{equation}\label{e.A(x)boxed}  
 A(x) = \left(k^k + (k-1)^{k-1}\right) \frac{x^2}{(1-x-x^k)^2}.
\end{equation}

\noindent{\bf Generating function for the second part}. Next, we look at $B(x)$. From the definition of $B_n$ in  (\ref{e.Bn}) we have that its generating function $B(x)$ is 
\begin{equation*}
    B(x) = k^{k-1}
    \sum_{n=0}^{\infty} (n+k-2) \mathcal{R}_{n+k-1}x^n. 
\end{equation*}
We notice that this is a term-by-term derivative of another, simpler power series, as shown here:
\begin{equation*}
    B(x) = k^{k-1}
    \sum_{n=0}^{\infty} \mathcal{R}_{n+k-1}\cdot \frac{1}{x^{k-3}} \frac{d}{dx} x^{n+k-2}. 
\end{equation*}
We re-arrange the terms in this sum to give us 
\begin{equation*}
    B(x) = \frac{k^{k-1}}{x^{k-3}}\cdot  \frac{d}{dx}
    \sum_{n=0}^{\infty} \mathcal{R}_{n+k-1} x^{n+k-2}. 
\end{equation*}
This is almost, but not quite, what we want, because the subscript for $\mathcal{R}_{n+k-1}$ is not quite a perfect match for the exponent in $x^{n+k-2}$. An easy fix is to multiply and divide by $x$, giving us 
\begin{equation*}
    B(x) = \frac{k^{k-1}}{x^{k-3}}\cdot  \frac{d}{dx}
    \left( \frac{1}{x}\sum_{n=0}^{\infty} \mathcal{R}_{n+k-1} x^{n+k-1}\right). 
\end{equation*}
We now notice that the sum in the above equation starts at $n=0$ with $\mathcal{R}_{k-1} x^{k-1}$ and so if we re-index the sum to start with $\mathcal{R}_{0} x^{0}$ and then subtract the unwanted terms from $\mathcal{R}_{0} x^{0}$ up to $\mathcal{R}_{k-2} x^{k-2}$,
we have 
\begin{equation*}
    B(x) = \frac{k^{k-1}}{x^{k-3}}\cdot  \frac{d}{dx}
    \left( \frac{1}{x}\sum_{n=0}^{\infty} \mathcal{R}_{n}x^n  - \frac{1}{x}\sum_{i=0}^{k-2} \mathcal{R}_{i}x^i \right). 
\end{equation*}
The first sum is simply the generating function for $\mathcal{R}_n$ as seen in  (\ref{e.Rngf}). For the second sum, we note from  the initial conditions in (\ref{e.RnInitial}) that  
$\mathcal{R}_0 = 0$ and $\mathcal{R}_i = 1$ for $i$ between $1$ and $k-2$, and so we now have 
\begin{equation*}
    B(x) = \frac{k^{k-1}}{x^{k-3}}\cdot  \frac{d}{dx}
    \left( \frac{1}{1-x-x^k}  - \frac{1}{x}\sum_{i=1}^{k-2} x^i \right). 
\end{equation*}
We now distribute the $1/x$ into the sum on the right of the above equation, giving us (after re-indexing)
\begin{equation*}
    B(x) = \frac{k^{k-1}}{x^{k-3}}\cdot  \frac{d}{dx}
    \left( \frac{1}{1-x-x^k}  - \sum_{i=0}^{k-3} x^i \right). 
\end{equation*}
When we apply the derivative to the right-hand side, we get 
\begin{equation*}
    B(x) = \frac{k^{k-1}}{x^{k-3}}
    \left( \frac{1+kx^{k-1}}{(1-x-x^k)^2}  - \sum_{i=0}^{k-3} ix^{i-1} \right). 
\end{equation*}
and after distributing, we get 
\begin{equation}\label{e.B(x)boxed}
    B(x) = 
    \frac{k^{k-1}+k^k x^{k-1}}{x^{k-3}(1-x-x^k)^2}  - \frac{k^{k-1}}{x^{k-3}} \sum_{i=0}^{k-3} ix^{i-1}. 
\end{equation}

\noindent{\bf Generating function for the third part}. Finally, we turn our attention to $C(x)$. From the definition of $C_n$ in  (\ref{e.Cn}) we have that its generating function $C(x)$ is 
\begin{equation*}
    C(x) = \sum_{n=0}^{\infty} \sum_{j=0}^{k-2}\left(k^j\cdot (k-1)^{k-2-j}\right) (n+k+j-1) \mathcal{R}_{n+j} x^n.
\end{equation*}
When we switch the order of summation, we have 
\begin{equation*}
    C(x) = \sum_{j=0}^{k-2}\left(k^j\cdot (k-1)^{k-2-j}\right) \sum_{n=0}^{\infty} (n+k+j-1) \mathcal{R}_{n+j} x^n,
\end{equation*}
and just as with $B(x)$ earlier we notice that 
the power series on the right is a term-by-term derivative of another, simpler power series, as given here:
\begin{equation*}
    C(x) = \sum_{j=0}^{k-2}\left(k^j\cdot (k-1)^{k-2-j}\right) \sum_{n=0}^{\infty} \frac{1}{x^{k+j-2}} \frac{d}{dx}  \mathcal{R}_{n+j} x^{n+k+j-1}.
\end{equation*}
We re-arrange the terms to give us 
\begin{equation*}
    C(x) = \sum_{j=0}^{k-2}\frac{k^j\cdot (k-1)^{k-2-j}}{x^{k+j-2}} \cdot\frac{d}{dx}  \sum_{n=0}^{\infty}  \mathcal{R}_{n+j} x^{n+k+j-1}.
\end{equation*}
Once again we notice that this is almost, but not quite, what we want, because the subscript for $\mathcal{R}_{n+j}$ is not quite a perfect match for the exponent in $x^{n+k+j-1}$. An easy fix is to pull out $x^{k-1}$, giving us 
\begin{equation*}
    C(x) = \sum_{j=0}^{k-2}\frac{k^j\cdot (k-1)^{k-2-j}}{x^{k+j-2}} \cdot\frac{d}{dx}  x^{k-1}\sum_{n=0}^{\infty}  \mathcal{R}_{n+j} x^{n+j}.
\end{equation*}
We note that each sum on the right of the above equation starts at $n=0$ with $\mathcal{R}_{j} x^{j}$, and so if we re-index each sum to start with $\mathcal{R}_{0} x^{0}$ and then subtract the unwanted terms from $\mathcal{R}_{0} x^{0}$ up to $\mathcal{R}_{j-1} x^{j-1}$, we have 
\begin{equation}\label{e.63}
    C(x) = \sum_{j=0}^{k-2}\frac{k^j\cdot (k-1)^{k-2-j}}{x^{k+j-2}} \cdot\frac{d}{dx} \left( x^{k-1}\sum_{n=0}^{\infty}  \mathcal{R}_{n} x^{n} - x^{k-1}\sum_{i=0}^{j-1}  \mathcal{R}_{i} x^{i}\right),
\end{equation}
with the understanding that when $j=0$ the last sum on the right is an empty (zero) sum. 

Now, the first sum inside the parentheses on the right 
of  (\ref{e.63}) 
is simply the generating function for $\mathcal{R}_n$ as seen in  (\ref{e.Rngf}), and for the second sum we recall once again from  (\ref{e.RnInitial}) that 
$\mathcal{R}_0 = 0$ and $\mathcal{R}_i=1$ for $i$ between $1$ and $k-1$. This means that our last sum on the right of  (\ref{e.63}) actually starts at $i=1$ (because, again, 
$\mathcal{R}_0 = 0$ by definition) and has just powers of $x$ without coefficients. 
Putting this all together, we see that   (\ref{e.63}) becomes
\begin{equation}\label{e.64}
    C(x) = \sum_{j=0}^{k-2}\frac{k^j\cdot (k-1)^{k-2-j}}{x^{k+j-2}} \cdot\frac{d}{dx} \left( \frac{x^{k}}{1-x-x^k} - x^{k-1}\sum_{i=1}^{j-1}  x^{i}\right).
\end{equation}
We note that the last sum on the right is an empty sum (and hence is zero) when $j=0$ or $j=1$. We now put the $x^{k-1}$ back into the last sum on the right, giving us 
\begin{equation*}
    C(x) = \sum_{j=0}^{k-2}\frac{k^j\cdot (k-1)^{k-2-j}}{x^{k+j-2}} \cdot\frac{d}{dx} \left( \frac{x^{k}}{1-x-x^k} - \sum_{i=1}^{j-1}  x^{k+i-1}\right).
\end{equation*}
When we take the derivative and simplify, we have 
\begin{equation*}
    C(x) = \sum_{j=0}^{k-2}\frac{k^j\cdot (k-1)^{k-2-j}}{x^{k+j-2}} \left( \frac{kx^{k-1}-(k-1)x^k}{(1-x-x^k)^2 } 
    - \sum_{i=1}^{j-1}  (k+i-1) x^{k+i-2}\right).
\end{equation*}
After canceling the common $x^{k-1}$ term everywhere, we have 
\begin{equation*}
    C(x) = \sum_{j=0}^{k-2}\frac{k^j\cdot (k-1)^{k-2-j}}{x^{j-1}} \left( \frac{k-(k-1)x}{(1-x-x^k)^2 } 
    - \sum_{i=1}^{j-1}  (k+i-1) x^{i-1}\right).
\end{equation*}
We re-index that last sum by replacing $i$ with $i+1$, giving us  
\begin{equation*}
    C(x) = \sum_{j=0}^{k-2}\frac{k^j\cdot (k-1)^{k-2-j}}{x^{j-1}} \left( \frac{k-(k-1)x}{(1-x-x^k)^2 } 
    - \sum_{i=0}^{j-2}  (k+i) x^{i}\right).
\end{equation*}
After distributing and re-organizing, we have 
\begin{equation}\label{e.C(x)almostboxed}
    C(x) = \frac{kx-(k-1)x^2}{(1-x-x^k)^2 } \sum_{j=0}^{k-2} \left(\frac{k}{x}\right)^j (k-1)^{k-2-j}
    - x\sum_{j=0}^{k-2} \left(\frac{k}{x}\right)^j(k-1)^{k-2-j} \sum_{i=0}^{j-2} (k+i) x^{i}.
\end{equation}
For the first sum in the above equation, we use the identity
\begin{equation*}
\sum_{j=0}^{m} a^j b^{m-j} = \frac{a^{m+1} - b^{m+1}}{a-b}
\end{equation*}
to write
\begin{equation*}
    \sum_{j=0}^{k-2} \left(\frac{k}{x}\right)^j (k-1)^{k-2-j} = \frac{(k/x)^{k-1} - (k-1)^{k-1}}{k/x - (k-1)}
    = \frac{x^2\left((k/x)^{k-1} - (k-1)^{k-1}\right)}{kx - (k-1)x^2},
\end{equation*}
and when we substitute this into  (\ref{e.C(x)almostboxed}) and simplify, we get
\begin{equation}\label{e.C(x)almostboxed2}
    C(x) = \frac{x^2\left((k/x)^{k-1} 
    - (k-1)^{k-1}\right)}{(1-x-x^k)^2 } 
    - x\sum_{j=0}^{k-2} \left(\frac{k}{x}\right)^j(k-1)^{k-2-j} \sum_{i=0}^{j-2} (k+i) x^{i}.
\end{equation}
We now multiply the top and bottom of the first term by $x^{k-3}$ to give us 
\begin{equation}\label{e.C(x)almostboxed3}
      C(x) = \frac{ k^{k-1} 
    - (k-1)^{k-1}x^{k-1}}{x^{k-3}(1-x-x^k)^2 } 
    - x\sum_{j=0}^{k-2} \left(\frac{k}{x}\right)^j(k-1)^{k-2-j} \sum_{i=0}^{j-2} (k+i) x^{i}.
\end{equation}

We now turn our attention to the double sum on the right of  (\ref{e.C(x)almostboxed3}). We move the $x^j$ term inside the second sum, we  bring that $x^i$ term into the denominator, we note that the first sum can start at $j=2$ instead of at $j=0$, and then we bring out the second summation, giving us 
\begin{equation}\label{e.C(x)almostboxed4}
    C(x) = \frac{k^{k-1} - (k-1)^{k-1} x^{k-1}}{x^{k-3}(1-x-x^k)^2 } 
    - x\sum_{j=2}^{k-2}\sum_{i=0}^{j-2} k^j (k-1)^{k-2-j}  \frac{(k+i)}{x^{j-i}}.
\end{equation}
We wish to re-index this double sum, and so let us first describe the values for $i$ and $j$ that appear in the double sum:
\begin{center}
\begin{tabular}{rl}
 $j=2$: & $i=0.$  \\
 $j=3$: & $i=0,1.$  \\
 $j=4$: & $i=0,1,2.$  \\
 $j=5$: & $i=0,1,2,3.$  \\
 $\vdots\ \ \ \ $ & \\
 $j=k-2$: & $i=0,1,2,3, \dots, k-4.$  
 \end{tabular}
\end{center}
If we now let $w=j-i$, we see that $w$ runs from $w=2$ (which covers the pairs $(j=2,i=0)$ and $(j=3,i=1)$ and $(j=4,i=2)$, and so on) up to $w=k-2$ which covers only the pair $(j=k-2, i=0)$. So, we can cover all of this by letting $i$ run from $i=0$ to $i=k-2-w$, and so when we re-write our double sum in terms of $w$ and $i$, we have 
\begin{equation}\label{e.C(x)almostboxed5}
    C(x) = \frac{k^{k-1} - (k-1)^{k-1} x^{k-1}}{x^{k-3}(1-x-x^k)^2 } 
    - x\sum_{w=2}^{k-2}\sum_{i=0}^{k-2-w} 
    k^{w+i} (k-1)^{k-2-w-i}  \frac{(k+i)}{x^{w}}.
\end{equation}
When we pull out some terms from the inner sum, we have 
\begin{equation}\label{e.C(x)almostboxed6}
    C(x) = \frac{k^{k-1} - (k-1)^{k-1} x^{k-1}}{x^{k-3}(1-x-x^k)^2 } 
    - x\sum_{w=2}^{k-2} \frac{k^w(k-1)^{k-2-w}}{x^{w}} \sum_{i=0}^{k-2-w} \left(\frac{k}{k-1}\right)^i (k+i).
\end{equation}
We re-index once more, letting $m=k-2-w$ so that $m$ runs from $0$ to $k-4$, giving us 
\begin{equation}\label{e.C(x)almostboxed7}
    C(x) = \frac{k^{k-1} - (k-1)^{k-1} x^{k-1}}{x^{k-3}(1-x-x^k)^2 } 
    - x\sum_{m=0}^{k-4} \frac{k^{k-2-m}(k-1)^{m}}{x^{k-2-m}} \sum_{i=0}^{m} \left(\frac{k}{k-1}\right)^i (k+i).
\end{equation}
We now apply Lemma \ref{lemma2}, giving us 
\begin{equation}\label{e.C(x)almostboxed8}
    C(x) = \frac{k^{k-1} - (k-1)^{k-1} x^{k-1}}{x^{k-3}(1-x-x^k)^2 } 
    - x\sum_{m=0}^{k-4} \frac{1}{x^{k-2-m}} k^{k-1}(m+1).
\end{equation}
Cleaning up a bit, this gives us 
\begin{equation}\label{e.C(x)almostboxed9}
    C(x) = \frac{k^{k-1} - (k-1)^{k-1} x^{k-1}}{x^{k-3}(1-x-x^k)^2 } 
    - \frac{k^{k-1}}{x^{k-3}}\sum_{m=0}^{k-4} (m+1)x^m.
\end{equation}
That last sum is empty for $k<4$, but otherwise it starts with  $1 + 2x + 3x^2$ and ends with $(k-3)x^{k-4}$, so we can re-index it by $i=m+1$ and write it as 
\begin{equation}\label{e.C(x)boxed}
    C(x) = \frac{k^{k-1} - (k-1)^{k-1} x^{k-1}}{x^{k-3}(1-x-x^k)^2 } 
    - \frac{k^{k-1}}{x^{k-3}}\sum_{i=0}^{k-3} ix^{i-1}.
\end{equation}

\noindent{\bf Conclusion of proof}. When we compare our formula for $C(x)$ in (\ref{e.C(x)boxed}) with our formula for $B(x)$ in  (\ref{e.B(x)boxed}),
we note that 
\[
B(x) - C(x) = \frac{k^{k} x^{k-1}}{x^{k-3}(1-x-x^k)^2} -
 \frac{-(k-1)^{k-1}x^{k-1}}{x^{k-3}(1-x-x^k)^2} = 
\frac{\left( k^{k} + (k-1)^{k-1}\right) x^2}{(1-x-x^k)^2}, 
\]
which is a perfect match for $A(x)$ in  (\ref{e.A(x)boxed}), as desired. 
\end{proof}

\section*{Acknowledgments} The authors express their gratitude to Pioneer Academics and to Washington \& Lee University for providing the resources for this collaborative effort, and to our anonymous referee who gave us a much simpler proof of Lemma \ref{lemma2} along with many other helpful  comments that greatly improved this paper.

\section*{Disclosure Statement}
No  conflict of interest has been reported by the authors.

\medskip
\noindent MSC2020: 11B39

\end{document}